\theoremstyle{plain}
\newtheorem*{thm}{Theorem}
\newtheorem{prop}{Proposition}
\newcommand{\re}[1]{(\ref{#1})}
\newcommand{\diff}[2]{\frac{\partial#1}{\partial#2}}
\newcommand{\diffd}[2]{\frac{\partial^2#1}{\partial#2^2}}
\def\({\left(}
\def\){\right)}
\title{Regularization of an Ill-posed Cauchy Problem for the Wave Equation (Fourier Method)}
\author{M.N. Demchenko\footnote{St.Petersburg Department of V.A.~Steklov Institute of Mathematics of Russian Academy of Sciences.
demchenko@pdmi.ras.ru.
The research is supported by the grants RFBR 15-31-20600-mol-a-ved and RFBR 14-01-00535-a.}}
\date{}
\begin{document}

\maketitle

\begin{abstract}
An ill-posed Cauchy problem for the wave equation is considered:
the solution is to be determined by the Cauchy data
on some part of the time-space boundary.
By means of Fourier method we obtain a regularization algorithm
for this problem, which is given by
rather explicit formula.

\smallskip

\noindent \textbf{Keywords:} wave equation, ill-posed Cauchy problem, regularization algorithm.
\end{abstract}

\medskip

\section{Problem statement} 
Suppose $u(x,y,t)$ is a smooth function in $x,t\in{\mathbb R}$, $y \geqslant 0$, satisfying the wave equation and the Dirichlet boundary condition:
\begin{align}
  &\diffd{u}{t} - \diffd{u}{x} - \diffd{u}{y} = 0, \label{waveq} \\
  &u|_{y=0} = 0. \label{bnd}
\end{align}
We consider an ill-posed Cauchy problem for the equation~\re{waveq}: the function $u$ is to be determined by the normal derivative $\diff{u}{y}$ given on the part of the boundary
$\{y=0\}$.
We shall obtain a regularization procedure for determining $u(x_0,y_0,t_0)$, $y_0>0$, which requires the data $\diff{u}{y}$ on the set
\begin{equation}\label{set}
  U := \left\{(x,0,t)\,|\,\, |x-x_0| \leqslant D\(\sqrt{y_0^2-(t-t_0)^2}\)+\varepsilon, |t-t_0| \leqslant y_0 \right\}.
\end{equation}
Here
\[
  D(z) := z \cdot \sqrt{\frac{c}{c + 2z}}, \quad z \geqslant 0,
\]
and $c,\varepsilon$ are arbitrary positive numbers.
Note that the function $D(z)$ increases, so $U$ is a subset of the following rectangle
\begin{equation}\label{rect}
  \left\{(x,0,t)\,|\,\, |x-x_0| \leqslant D(y_0)+\varepsilon, |t-t_0| \leqslant y_0 \right\}.
\end{equation}
The main result of the paper is the following
\begin{thm}
Suppose $y_0>0$, $x_0, t_0 \in{\mathbb R}$.
For a $C^\infty$-smooth solution of the equation~\re{waveq}
in the domain $\{y\geqslant 0\}$ with the boundary condition~\re{bnd}
the following relation holds true
  \begin{equation}\label{main}
    u(x_0, y_0, t_0) = \lim_{h\to 0+} \int_U dx dt\, 
    K_h(x-x_0, y_0, t-t_0)\, \diff{u}{y}(x, 0, t),
  \end{equation}
  where the 
  kernel $K_h$ is defined as follows
\begin{align}
  &K_h(x, y_0, t) :=\notag\\
  &=\frac{1}{2\pi^{3/2}\sqrt{h}} \, {\rm Re}\left[\frac{1}{\sqrt{c+ix}}
  \int_0^{\pi/2} ds\,
  \exp\left(-\frac{1}{4h(c+ix)}\left(x+i\sqrt{y_0^2-t^2}\, 
    {\sin}s\right)^2\right)\right]
  \label{Kh}
\end{align}
(we chose a leaf of the square root $\sqrt{c+ix}$ in such a way that ${\rm Re}\sqrt{c+ix} > 0$).
\end{thm}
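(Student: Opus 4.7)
The plan is to use the Fourier method in the tangential variables $(x,t)$. By translation invariance I may assume $x_0=t_0=0$ and set $\phi(x,t)=\diff{u}{y}(x,0,t)$. Applying the Fourier transform in $(x,t)$ with dual variables $(\xi,\tau)$ reduces \re{waveq} to the ODE $\partial_y^2\widehat u=(\xi^2-\tau^2)\widehat u$, and the boundary condition \re{bnd} selects the representation
\[
\widehat u(\xi,y,\tau)=\widehat\phi(\xi,\tau)\,P(\xi,y,\tau),\qquad P(\xi,y,\tau)=\frac{\sinh(y\sqrt{\xi^2-\tau^2}\,)}{\sqrt{\xi^2-\tau^2}}.
\]
The ill-posedness is visible as the exponential growth of $P$ on the elliptic region $|\xi|>|\tau|$, so the inverse Fourier transform of $P(\cdot,y_0,\cdot)$ is not a classical function.

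To regularize I would multiply $P$ by a complex-Gaussian factor $R_h(\xi,\tau)$ satisfying $R_h\to 1$ pointwise as $h\to 0^+$ and making $R_h\cdot P$ a Schwartz function; the kernel $K_h$ is then the inverse Fourier transform of $R_h P$. For sufficiently decaying $\phi$, Parseval gives
\[
\int K_h(x,y_0,t)\,\phi(x,t)\,dx\,dt=\frac{1}{(2\pi)^2}\int R_h(\xi,\tau)\,P(\xi,y_0,\tau)\,\widehat\phi(\xi,\tau)\,d\xi\,d\tau,
\]
and the right-hand side converges to $u(0,y_0,0)=(2\pi)^{-2}\int P\widehat\phi\,d\xi\,d\tau$ by dominated convergence. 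The explicit evaluation of $K_h$ in the form \re{Kh} is the bulk of the computation: expand $P$ using $\sinh z/z=\int_0^1\cosh(zr)\,dr$, substitute $r=\sin s$, and for each $s$ reduce the $(\xi,\tau)$-integral to a two-dimensional Gaussian. Completing the square in $\xi$ and $\tau$ produces the prefactor $1/\sqrt{c+ix}$ together with the complex quadratic form $-(x+i\sqrt{y_0^2-t^2}\sin s)^2/(4h(c+ix))$ appearing in \re{Kh}, with the free parameter $c>0$ entering as the real part of the (complex) variance of the regularizing Gaussian.

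The most delicate step is to show that the integration can be restricted to the bounded set $U$, so that the formula applies to general (non-decaying) smooth $u$. A direct calculation gives
\[
{\rm Re}\left[\frac{(x+i\sqrt{y_0^2-t^2}\sin s)^2}{c+ix}\right]=\frac{x^2(c+2a)-c\,a^2}{c^2+x^2},\qquad a=\sqrt{y_0^2-t^2}\sin s,
\]
which is positive precisely when $|x|>a\sqrt{c/(c+2a)}=D(a)$. Maximizing over $s\in[0,\pi/2]$ yields the threshold $|x|>D(\sqrt{y_0^2-t^2})$ appearing in the definition \re{set} of $U$; the margin $\varepsilon$ provides a uniform exponential gap of size $\exp(-\kappa/h)$ for some $\kappa>0$, which dominates the at most polynomial behaviour of $\phi$ on compact sets and lets one argue, via a suitable cut-off of $u$ outside a large ball, that $\int_{\{y=0\}\setminus U}K_h\phi\,dx\,dt\to 0$. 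This tail estimate, and the matching argument that the cut-off version has the required Parseval convergence, is the main technical obstacle; the formal Fourier manipulations are routine once the kernel is in the explicit form \re{Kh}.
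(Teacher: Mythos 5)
Your overall architecture (Fourier transform in $(x,t)$, the multiplier $\sinh(y_0\sqrt{\xi^2-\tau^2})/\sqrt{\xi^2-\tau^2}$, regularize, then localize to $U$ by the sign of the real part of the exponent) is the paper's, and your computation showing that ${\rm Re}[(x+ia\sin s)^2/(c+ix)]$ changes sign at $|x|=D(a)$, with the minimum over $s$ attained at the endpoint $a=\sqrt{y_0^2-t^2}$, is exactly the paper's Section~\ref{formula_derive}. But the central convergence step has a genuine gap: the kernel \re{Kh} is \emph{not} the inverse Fourier transform of $R_h(\xi,\tau)\,P(\xi,y_0,\tau)$ for a multiplier $R_h$. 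The Gaussian variance $h(c\pm ix)$ depends on the physical variable $x$, so on the frequency side the regularization acts both by the damping $e^{-hck^2}$ \emph{and} by the nonlinear frequency shift $k\mapsto k\pm hk^2$; this is visible in the paper's identity \re{gv}, where the transformed data appear as $\tilde v(k\pm hk^2,\omega)$ rather than $\tilde v(k,\omega)$. Hence your Parseval identity $\int K_h\phi=(2\pi)^{-2}\int R_hP\widehat\phi$ does not hold for the kernel as defined, and ``dominated convergence with $R_h\to1$'' does not close the argument. Controlling the shifted argument requires an $L^1_\omega$ modulus-of-continuity estimate for $\tilde v(k,\cdot)$ in $k$ (the paper's \re{tivnormdiff}) and a three-zone splitting of the $k$-axis at $h^{-\gamma}$ and $h^{-\beta}$; this is the bulk of the paper's proof and is absent from your outline. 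Note also that the two halves of your plan are in tension: a genuine multiplier regularization (say $e^{-h\xi^2}$) would indeed yield convergence by dominated convergence, but it would not produce the prefactor $1/\sqrt{c+ix}$ and the $x$-dependent exponent on which your localization to $U$ relies.

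A second, smaller gap: you justify the convergence of $\int P\widehat\phi$ by ``sufficiently decaying $\phi$'', but no polynomial (or even Schwartz) decay of $\phi$ beats the exponential growth of $P$ in the elliptic region $|\xi|>|\tau|$. The integral makes sense because $\widehat\phi=\tilde v$ is \emph{supported} in the hyperbolic cone $\{|\omega|\geqslant|k|\}$, where $P$ is bounded; the paper proves this (formula \re{tiv}) by computing the full space--time Fourier transform of the oddly continued, cut-off solution and showing it lives on the light cone, which is also where the reduction to compactly supported $u_0$ and the evenness in $t$ are used. Related to this, the paper splits $J_0=H(z)+H(-z)$ so that each piece $G_\pm$ is bounded on the half-line of $k$ where it is paired with the data (Proposition~\ref{Gprop}); your representation $\sinh z/z=\int_0^1\cosh(zr)\,dr$ contains the same integral but not the $\pm$ bookkeeping that makes each term separately controllable. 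These ingredients need to be supplied before the limit \re{lim} can be taken; the final localization step of your proposal is sound.
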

Note that the regularization kernel $K_h$ is real and even in both variables $x$ and $t$.

In formula~\re{main} the set of integration $U$ depends on $\varepsilon$,
while the integrand does not. 
Besides,
$K_h(x-x_0,y_0,t-t_0)\to 0$ as $h\to 0$, if
$|x-x_0| > D\(\sqrt{y_0^2-(t-t_0)^2}\)$,
consequently the limit in the r.h.s. of~\re{main} does not depend on $\varepsilon$ (this is discussed in sec.~\ref{formula_derive} in more details).
However, the rate of convergence of the limit 
does depend on the choice of $\varepsilon$. 

The Cauchy problem considered here was solved by R.~Courant~\cite{K};
the reduction to the problem of
recovering function by its spherical means was used.
According to~\cite{K} the Cauchy data on the rectangle
\begin{equation*}
  \left\{(x,0,t)\,|\,\, |x-x_0| \leqslant \delta, |t-t_0| \leqslant y_0 \right\}
\end{equation*}
with arbitrarily small $\delta$ are sufficient to determine $u(x_0,y_0,t_0)$.
These data would be sufficient if one uses the formula~\re{main} as well: 
parameters $c$, $\varepsilon$ can be chosen arbitrarily small
and $D(y_0)\to 0$ as $c\to 0$, so the rectangle~\re{rect}
together with the set $U$ can have arbitrarily small 
size along the $x$-axis.

In~\cite{K} there is no parameter that is analogous to $c$ in~\re{main};
in fact the Cauchy data in the ``infinitesimal'' neighborhood
of the interval $\{x=x_0, |t-t_0|\leqslant y_0\}$ is used
(the derivatives of data and intermediate functions on the interval are calculated).
At the same time for a fixed $c$ the kernel
$K_h(x-x_0,y_0,t-t_0)$ does not tend to zero 
as
$h\to 0$ if $|x-x_0| \leqslant D\(\sqrt{y_0^2-(t-t_0)^2}\)$ (from sec.~\ref{formula_derive} one can conclude that
$K_h$ grows exponentially).
Hence in the r.h.s. of~\re{main} the Cauchy data
on a set of positive ($2$-dimensional) measure are taken into account.
The dependence of stability of regularization of ill-posed problems for hyperbolic equations
on the amount of data 
was studied in~\cite{sh}
(the singular value decomposition was applied):
the larger amount of data provides the more stable regularization.

The problem of determining of the solution of the wave equation by the boundary data in specific domains (such as ball, ellipsoid, half-space) and related problems of integral geometry
were considered (besides~\cite{K} mentioned above) in~\cite{rom, Nat, FPR, Halt, Blag, Symes, Pal}.
In papers~\cite{Nat, FPR, Halt, Blag}
the inversion formulas were obtained, which unlike~\re{main},
require the data on the whole boundary and on sufficiently large time interval depending on the diameter of the domain.
In~\cite{Pal} the problem of recovering of the function in
half-plane by its mean values over circles centered at the boundary of half-plane; the microlocal estimate was obtained under assumption that the function is compactly supported.

To obtain~\re{main} we apply Fourier method to the wave equation.
Applying Fourier transform in $x$, we obtain the Cauchy problem for the wave equation
in the domain $y\geqslant 0, t\in{\mathbb R}$.
The inverse Fourier transform requires a regularization
(in~\re{main} $h$ is a small parameter of regularization).
Note that our problem is a particular case 
of the problem of integral geometry
considered in~\cite{rom},
where also Fourier method was applied.\footnote{In~\cite[Ch. I]{rom} the function in a layer is recovered by its mean values over some family of surfaces;
the latter was supposed to be invariant with respect to translation along transversal directions.
Our problem reduces to recovering a function by its mean values over circles centered at the line $\{y=0\}$.}
Our goal is to obtain a regularization that requires
the Cauchy data only in $U$.

Now we make some obvious simplifications of our problem.
Further we suppose that $x_0=t_0=0$. Thus formula~\re{main} takes the following form
\begin{equation}\label{mainn}
  u(0, y_0, 0) = \lim_{h\to 0+} \int_U dx dt\, 
  K_h(x, y_0, t)\, \diff{u}{y}(x, 0, t).
\end{equation}
It is sufficient to prove formula~\re{mainn} for even function $u$ in $t$.
Indeed, in general case we may consider an even function
\[
  u'(x, y, t) = \left(u(x, y, t) + u(x, y, -t)\right)/2,
\]
which satisfies~\re{waveq}, \re{bnd}, and apply~\re{mainn} to $u'$.
We have $u(0,y_0,0) = u'(0,y_0,0)$.
Since the kernel $K_h$ is even in $t$, the derivative $\diff{(u-u')}{y}$ is odd in $t$,
and the set $U$ is symmetric with respect to $\{t=0\}$,
we have
\begin{align*}
  \int_U dx dt\,
  K_h(x, y_0, t)\, \left(\diff{u}{y}(x, 0, t) - \diff{u'}{y}(x, 0, t)\right) = 0.
\end{align*}
This implies~\re{mainn} in general case.

Further we suppose $u$ to be even in $t$.
From the wave equation~\re{waveq} and boundary condition~\re{bnd} immediately follows
the relation $\partial^n u/\partial y^n = 0$ for $y=0$ and even $n$.
Therefore, an odd continuation of $u$ in $y$ belongs to $C^\infty({\mathbb R}^3)$.
We use the same notation $u$ for such a continuation.

Put $u_0 := u|_{t=0}$, $u_0\in C^\infty({\mathbb R}^2)$.
As $u$ is even in $t$ we have $\partial u/\partial t = 0$ for $t=0$.
Let $\chi(x,y)$ be a $C^\infty$-smooth compactly supported function in ${\mathbb R}^2$ satisfying $\chi(x,y)=1$ for $x^2+y^2 \leqslant R^2$ for some $R$.
Then the solution $u_\chi$ of the problem
\begin{align*}
  &\diffd{u_\chi}{t} - \diffd{u_\chi}{x} - \diffd{u_\chi}{y} = 0, \quad
  u_\chi|_{t=0} = \chi u_0, \quad \diff{u_\chi}{t}\bigg|_{t=0} = 0,
\end{align*}
coincides with $u$ on the set
\[
  \{ (x,y,t)\in{\mathbb R}^3 \,|\, \sqrt{x^2+y^2} + |t| \leqslant R\}.
\]
Choosing sufficiently large $R$ we guarantee that $u_\chi(0,y_0,0) = u(0,y_0,0)$
and
\[
  \diff{u_\chi}{y} = \diff{u}{y}
\]
on the set~\re{set}.
Thus, in proof of formula~\re{mainn} we may suppose that $u_0$ is compactly supported.

\section{Fourier transform of the solution $u$}
Here we study some properties of Fourier transform $\hat u(k,l,\omega)$ of the function
$u(x,y,t)$.
We use the following formulas for Fourier transform (and its inverse) of
function $f(x)$:
\[
  \hat f(k) = \int_{-\infty}^\infty dx\, e^{-ikx} f(x),\quad
  f(x) = \frac{1}{2\pi}\int_{-\infty}^\infty dk\, e^{ikx} \hat f(k),\quad
\]

Since $u_0(x,y)$ is smooth and compactly supported its Fourier transform
$\hat u_0(k,l)$ belongs to $S({\mathbb R}^2)$ (Schwartz space).
Further we use the following estimates
($N$ is a positive integer):
\begin{equation}
  |\hat u_0(k,l)|,\, \left|\diff{\hat u_0(k,l)}{k}\right|,\,
  \left|\diff{\hat u_0(k,l)}{l}\right| \leqslant 
  \frac{C_N}{(1+|k|^N)(1+|l|^N)}. 
  \label{u0decay}
\end{equation}

The function $\hat u$ belongs to $S'({\mathbb R}^3)$ (tempered distributions).
\begin{prop}
  The distribution $\hat u$ acts on the test function
  $\varphi\in S({\mathbb R}^3)$ in the following way
  \begin{equation}\label{hatu}
    \langle  \hat u, \varphi \rangle  = \int_{{\mathbb R}^2} dk dl\, \hat u_0(k,l)\, \frac{1}{2}\left(\varphi\left(k,l,\sqrt{k^2+l^2}\right)+
    \varphi\left(k,l,-\sqrt{k^2+l^2}\right)\right).
  \end{equation}
\end{prop}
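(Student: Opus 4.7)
The plan is to compute $u$ explicitly from its Cauchy problem by partial Fourier transform in $(x,y)$, and then to read off $\hat u\in S'({\mathbb R}^3)$ from the resulting representation. I would first take the partial Fourier transform of~\re{waveq} in $(x,y)$. Using the initial data $u|_{t=0}=u_0$ (with $u_0$ smooth and compactly supported after the earlier reduction) and $\partial_t u|_{t=0}=0$, which follows from the evenness of $u$ in $t$, this reduces the wave equation to the family of ODEs
\[
\partial_t^2\tilde u(k,l,t)+(k^2+l^2)\tilde u(k,l,t)=0,\quad \tilde u(k,l,0)=\hat u_0(k,l),\quad \partial_t\tilde u(k,l,0)=0,
\]
whose unique solution is $\tilde u(k,l,t)=\hat u_0(k,l)\cos(t\sqrt{k^2+l^2})$. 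Inverting the transform in $(x,y)$ gives the explicit representation
\[
u(x,y,t)=\frac{1}{(2\pi)^2}\int_{{\mathbb R}^2}\hat u_0(k,l)\cos(t\sqrt{k^2+l^2})e^{i(kx+ly)}\,dk\,dl,
\]
and the decay bounds~\re{u0decay} show that this integral is absolutely convergent and that $u$ grows at most polynomially in $(x,y,t)$, so indeed $u\in S'({\mathbb R}^3)$.

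Next, for $\varphi\in S({\mathbb R}^3)$ I would compute $\langle\hat u,\varphi\rangle$ using the duality $\langle\hat u,\varphi\rangle=\langle u,\hat\varphi\rangle$, substituting the formula above for $u$, and applying Fubini. Integration in $(x,y)$ first produces a factor $(2\pi)^2\delta(k-k')\delta(l-l')$ that cancels the prefactor $1/(2\pi)^2$, and the remaining inner integral is
\[
\int_{\mathbb R}\cos(t\sqrt{k^2+l^2})\int_{\mathbb R}\varphi(k,l,\omega)e^{-i\omega t}\,d\omega\,dt.
\]
Applying the distributional identity $\int e^{-i\omega t}\cos(t\omega_0)\,dt = \pi(\delta(\omega-\omega_0)+\delta(\omega+\omega_0))$ turns this into $\pi(\varphi(k,l,\sqrt{k^2+l^2})+\varphi(k,l,-\sqrt{k^2+l^2}))$, and after collecting the normalization constants fixed by the Fourier conventions in force one arrives at~\re{hatu}.

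The principal technical point will be the rigorous justification of the repeated Fubini interchanges, since $u$ itself is merely tempered rather than integrable. The bounds~\re{u0decay} on $\hat u_0$, together with the Schwartz decay of $\varphi$ and of $\hat\varphi$, guarantee absolute convergence of the multiple integral and make all exchanges of order legitimate; once the interchanges are in hand, the identification of the resulting distribution as the right-hand side of~\re{hatu} is a direct computation.
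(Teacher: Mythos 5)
Your argument is, at its core, the same as the paper's, just run in the opposite direction. The paper defines $\psi$ to be the right-hand side of~\re{hatu}, computes its inverse Fourier transform $\widecheck\psi$ as the absolutely convergent integral~\re{checkpsi}, checks that $\widecheck\psi$ solves the Cauchy problem with data $(u_0,0)$, and concludes $\widecheck\psi=u$ (hence $\psi=\hat u$) by uniqueness. You instead solve that same Cauchy problem by partial Fourier transform to obtain the representation $u=\frac{1}{(2\pi)^2}\int \hat u_0(k,l)\cos(t\sqrt{k^2+l^2})e^{i(kx+ly)}\,dk\,dl$ --- which is exactly the integrand of~\re{checkpsi} --- and then push it through the duality $\langle\hat u,\varphi\rangle=\langle u,\hat\varphi\rangle$. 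Both proofs rest on the same explicit formula and on uniqueness for the Cauchy problem (which you also need, implicitly, to identify the solution of the ODE family with the transform of $u$; this is unproblematic since finite propagation speed makes $u(\cdot,\cdot,t)$ compactly supported for each $t$). What the paper's direction buys is that every interchange of integrals is literally absolutely convergent: one pairs $\hat u_0\in S$ against $\widecheck\zeta\in S$ over $\mathbb{R}^2\times\mathbb{R}^3$ and Fubini applies verbatim.

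The one genuine weak point is your closing claim that the decay bounds ``guarantee absolute convergence of the multiple integral and make all exchanges of order legitimate.'' That is not true of the full iterated integral you write down: once you expand both $u$ and $\hat\varphi$ as Fourier integrals, the $(x,y)$-integral of the modulus of the integrand diverges (this is precisely why a $\delta$-function appears), so the steps producing $(2\pi)^2\delta(k-k')\delta(l-l')$ and $\pi(\delta(\omega-\omega_0)+\delta(\omega+\omega_0))$ are not Fubini interchanges. The repair is routine but should be stated: the only honest Fubini step is the swap of $\int d\xi$ with $\int dx\,dy\,dt$, which is absolutely convergent because $\hat u_0$ and $\hat\varphi$ are both integrable; after that, the inner $(x,y)$- and $t$-integrals must be evaluated as exact Fourier-inversion identities for Schwartz functions rather than via formal $\delta$-calculus. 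Finally, since you defer the constants to ``collecting the normalization constants,'' you must actually do this: with the conventions stated in the paper, a careful bookkeeping of your computation produces a factor $2\pi$ in front of the right-hand side of~\re{hatu} (the same factor appears if one evaluates~\re{checkpsi} at $t=0$ and compares with $u_0$), so the normalization cannot simply be asserted to come out right.
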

\begin{proof}
The r.h.s. of~\re{hatu} defines some distribution $\psi$ from $S'({\mathbb R}^3)$.
Now we show that the inverse Fourier transform $\widecheck\psi$ is a regular function
and the following relation holds true
\begin{equation}\label{checkpsi}
  \widecheck\psi(x,y,t) = \frac{1}{(2\pi)^3} \int_{{\mathbb R}^2} dk dl\, \hat u_0(k,l)
  \frac{1}{2}\left(e^{i(kx + ly +\sqrt{k^2+l^2} \cdot t)} + e^{i(kx + ly -\sqrt{k^2+l^2} \cdot t)}\right).
\end{equation}
(The integral in the r.h.s. absolutely converges due to~\re{u0decay}).
For a test function $\zeta\in S({\mathbb R}^3)$ we have
\begin{align*}
  &\langle \widecheck\psi,\zeta\rangle  = \langle \psi,\widecheck\zeta\rangle  = \frac{1}{(2\pi)^3} \int_{{\mathbb R}^2} dk dl\,
  \hat u_0(k,l) \int_{{\mathbb R}^3} dx dy dt\, \zeta(x,y,t) \frac{1}{2} 
  \sum_\pm e^{i(kx + ly \pm\sqrt{k^2+l^2} \cdot t)} =\\
  &=\frac{1}{(2\pi)^3} \int_{{\mathbb R}^3} dx dy dt\, \zeta(x,y,t)
  \int_{{\mathbb R}^2} dk dl\, \hat u_0(k,l) \frac{1}{2} 
  \sum_\pm e^{i(kx + ly \pm\sqrt{k^2+l^2} \cdot t)}
\end{align*}
($\widecheck\zeta$ is the inverse Fourier transform of $\zeta$).
This implies~\re{checkpsi}.
It can be easily derived from the formula~\re{checkpsi} that $\widecheck\psi$ is the solution
of the following Cauchy problem
\begin{align*}
  &\diffd{\widecheck\psi}{t} - \diffd{\widecheck\psi}{x} - \diffd{\widecheck\psi}{y} = 0, \quad
  \widecheck\psi|_{t=0} = u_0, \quad \diff{\widecheck\psi}{t}\bigg|_{t=0} = 0,
\end{align*}
therefore, $\widecheck\psi = u$, and so $\psi = \hat u$.
\end{proof}

Let $\tilde u(k,y,\omega)$ be Fourier transform of $u(x,y,t)$ in $x$, $t$.
The function $\tilde u(\cdot, y, \cdot)$ belongs to $S'({\mathbb R}^2)$.
\begin{prop}
  For any $y_0$ the function $\tilde u(\cdot, y_0, \cdot)$ is regular and the following relation holds true
  \begin{equation}\label{utif}
    \tilde u(k, y_0,\omega) = 
      \theta(|\omega|-|k|)\,
      \frac{|\omega|}{4\pi\sqrt{\omega^2-k^2}}
      \sum_\pm \hat u_0\(k, \pm\sqrt{\omega^2-k^2}\)\,
      e^{\pm i y_0 \sqrt{\omega^2-k^2}}
  \end{equation}
($\theta$ is the Heaviside function).
\end{prop}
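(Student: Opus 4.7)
My plan is to derive \re{utif} by applying Fourier inversion in $(x,t)$ to the explicit formula \re{checkpsi} for $u$, after a change of variable $l\leftrightarrow \omega$ along the surface $\omega^2=k^2+l^2$.

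Setting $y=y_0$ in \re{checkpsi} expresses $u(x,y_0,t)$ as an absolutely convergent integral over $(k,l)$ with integrand containing the exponentials $e^{i(kx+ly_0\pm\sqrt{k^2+l^2}\,t)}$. For each fixed $k$ I would split the inner integration at $l=0$ and change variable to $\omega=\epsilon\sqrt{k^2+l^2}$, with $\epsilon=\pm 1$ chosen to match the sign in the $t$-exponent; the Jacobian is $dl=|\omega|\,d\omega/\sqrt{\omega^2-k^2}$. The four sign combinations (two signs of $\epsilon$ times two signs of $l$) then assemble into a single integration over $\{|\omega|>|k|\}$, yielding
\begin{align*}
  u(x,y_0,t) = \frac{1}{(2\pi)^2}\int dk\, d\omega\, e^{i(kx+\omega t)}\,
  \theta(|\omega|-|k|)\,\frac{|\omega|}{4\pi\sqrt{\omega^2-k^2}}\sum_{\pm}
  \hat u_0\(k,\pm\sqrt{\omega^2-k^2}\)\, e^{\pm iy_0\sqrt{\omega^2-k^2}}.
\end{align*}
Comparing this with the Fourier inversion formula $u(x,y_0,t)=\frac{1}{(2\pi)^2}\int dk\,d\omega\,e^{i(kx+\omega t)}\,\tilde u(k,y_0,\omega)$ identifies $\tilde u(k,y_0,\omega)$ with the coefficient of $e^{i(kx+\omega t)}$ in the previous display, which is exactly the right-hand side of \re{utif}.

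To make the identification rigorous as an equality in $S'({\mathbb R}^2)$, I would pair both sides with an arbitrary $\varphi\in S({\mathbb R}^2)$ and verify the resulting scalar identity by Fubini. The estimates \re{u0decay} ensure that $\hat u_0$ is rapidly decreasing, while the Jacobian factor $1/\sqrt{\omega^2-k^2}$ has only a square-root singularity along $\{|\omega|=|k|\}$, which is locally integrable in ${\mathbb R}^2$; together these imply that the right-hand side of \re{utif} defines a locally integrable, and hence regular, tempered distribution. The main delicate point I expect is precisely this singularity at $|\omega|=|k|$: both the tempered-distribution character of the formula and its agreement with the distributional Fourier transform $\tilde u(\cdot,y_0,\cdot)$ must be read off from the absolute convergence provided by \re{u0decay}, so that the informal change of variable used above becomes a genuine distributional identity.
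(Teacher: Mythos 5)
Your proposal is correct and follows essentially the same route as the paper: the heart of both arguments is the change of variables $(k,l)\mapsto(k,\omega)$ with $\omega=\pm\sqrt{k^2+l^2}$ and Jacobian $|\omega|/\sqrt{\omega^2-k^2}$, splitting the $l$-integration at $l=0$ and reassembling the four sign combinations into a single integral over $\{|\omega|>|k|\}$. The only difference is how the restriction to the slice $y=y_0$ is handled: the paper pairs $\tilde u(\cdot,y_0,\cdot)$ with a test function and passes through $\langle \hat u, \widecheck f_\varepsilon\rangle$ with a Gaussian approximation $g_\varepsilon(y)$ of $\delta(y-y_0)$, whereas you evaluate the absolutely convergent representation \re{checkpsi} at $y=y_0$ directly and then invoke $L^1$ Fourier inversion --- a legitimate shortcut given that \re{checkpsi} was already established in the proof of the preceding proposition.
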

\begin{proof}
For a test function $\varphi(k,\omega)$ belonging to $S({\mathbb R}^2)$ we have
\begin{equation}\label{utilim}
  \langle \tilde u(\cdot, y_0, \cdot), \varphi\rangle  = \langle  u(\cdot, y_0, \cdot), \hat\varphi\rangle  =
  \lim_{\varepsilon\to 0}\, \langle u, f_\varepsilon\rangle , 
\end{equation}
where
\[
  f_\varepsilon(x,y,t) := \hat\varphi(x,t)\, g_\varepsilon(y), \quad
  g_\varepsilon(y) := \frac{e^{-(y-y_0)^2/\varepsilon}}{\sqrt{\pi\varepsilon}},
\]
($g_\varepsilon$ tends to $\delta(y-y_0)$ as $\varepsilon\to 0$). 
Due to~\re{hatu} we have
\begin{align*}
  \langle u, f_\varepsilon\rangle  = \langle \hat u, \widecheck f_\varepsilon\rangle  =
  \int_{{\mathbb R}^2} dk dl\, \hat u_0(k,l)\, \frac{1}{2}
  \sum_\pm \widecheck f_\varepsilon\(k,l,\pm\sqrt{k^2+l^2}\).
\end{align*}
It is easy to see that
\[
  \widecheck f_\varepsilon(k,l,\omega) \to \frac{1}{2\pi}\, e^{i y_0 l} \varphi(k,\omega), \quad \varepsilon\to 0.
\]
Together with~\re{utilim} this yields
\begin{equation}\label{utiphi}
  \langle \tilde u(\cdot, y_0, \cdot), \varphi\rangle  = 
  \int_{{\mathbb R}^2} dk dl\, \hat u_0(k,l)\, \frac{1}{4\pi}
  \sum_\pm e^{i y_0 l} \varphi(k,\pm\sqrt{k^2+l^2}). 
\end{equation}
Consider for example the term ``+'' of sum in the integral in~\re{utiphi}.
Represent its integral as the following sum 
\begin{align*}
  &\int_{{\mathbb R}^2\cap\{l > 0\}} + \int_{{\mathbb R}^2\cap\{l < 0\}}
  dk dl\, \hat u_0(k,l)\, 
  e^{i y_0 l} \varphi(k,\sqrt{k^2+l^2}).
\end{align*}
Now we make change of variables in both integrals
\[
  (k,l) \mapsto (k,\omega), \quad \omega = \sqrt{k^2+l^2}.
\]
We obtain
\begin{align*}
  &\int_0^\infty d\omega \int_{-\omega}^{\omega} dk\, \hat u_0(k,\sqrt{\omega^2-k^2})\,
  e^{i y_0 \sqrt{\omega^2-k^2}} \varphi(k,\omega) \frac{\omega}{\sqrt{\omega^2-k^2}}\, +\\
  &+ \int_0^\infty d\omega \int_{-\omega}^{\omega} dk\, \hat u_0(k,-\sqrt{\omega^2-k^2})\,
  e^{-i y_0 \sqrt{\omega^2-k^2}} \varphi(k,\omega) \frac{\omega}{\sqrt{\omega^2-k^2}}.
\end{align*}
Carrying out analogous calculations for the term ``--'' in the integral~\re{utiphi}
we arrive at~\re{utif}.
\end{proof}

Since the function $u_0(x,y)$ is odd in $y$, its Fourier transform $\hat u_0(k,l)$
is odd in $l$. Hence the formula~\re{utif} can be written as follows
\begin{equation}\label{utifodd}
  \tilde u(k,y_0,\omega) = 
    \theta(|\omega|-|k|)\,
    \frac{i |\omega|}{2\pi \sqrt{\omega^2-k^2}}\,
    \hat u_0\(k, \sqrt{\omega^2-k^2}\)\,
    \sin\(y_0 \sqrt{\omega^2-k^2}\).
\end{equation}

Put
\begin{equation}\label{vdef}
  v(x,t) := \diff{u}{y}(x,0,t)
\end{equation}
and denote by $\tilde v(k,\omega)$ Fourier transform of $v(x,t)$.
It follows from~\re{utifodd} that
\begin{equation}\label{tiv}
  \tilde v(k,\omega) = \frac{i |\omega|}{2\pi}\, \theta(|\omega|-|k|)\,
  \hat u_0\left(k, \sqrt{\omega^2-k^2}\right).
\end{equation}
Now we recast formula~\re{utifodd} as follows
\begin{equation}\label{utifin}
  \tilde u(k,y_0,\omega) = 
  \tilde v(k,\omega)\, \frac{\sin\(y_0 \sqrt{\omega^2-k^2}\)}{\sqrt{\omega^2-k^2}}.
\end{equation}

Relation~\re{tiv} implies that $\tilde v(k, \cdot) \in L_1({\mathbb R})$ for any $k$ 
and the estimate
\begin{equation}\label{tivnorm}
  \int_{-\infty}^\infty d\omega\, |\tilde v(k,\omega)| \le \frac{C}{1+k^2}
\end{equation}
holds true, where $C$ does not depend on $k$.
Indeed,
\begin{align*}
  &\int_{-\infty}^\infty d\omega\, |\tilde v(k,\omega)| =
  \frac{1}{\pi} \int_{|k|}^\infty d\omega\, \omega
  \left|\hat u_0\left(k, \sqrt{\omega^2-k^2}\right)\right| =
  \frac{1}{\pi} \int_0^\infty dl\, l\, |\hat u_0(k, l)|.
\end{align*}
Here the first equality holds true since $u$ is even in $t$ and so $\tilde v$ is even in $\omega$.
In the second equality we made change of variable $l = \sqrt{\omega^2-k^2}$.
Now applying the estimate~\re{u0decay} we arrive at~\re{tivnorm}.

The estimate~\re{tivnorm} means that $\tilde v\in L_1({\mathbb R}^2)$.
Taking into account~\re{utifin} and that $\tilde u$ and $\tilde v$
are supported in the set $\{|\omega|\geqslant |k|\}$, we have
$\tilde u(\cdot, y_0, \cdot)\in L_1({\mathbb R}^2)$.
Applying inverse Fourier transform to $\tilde u$ we obtain
\begin{equation}\label{vsin}
  u(0, y_0, 0) = \frac{1}{4\pi^2} \int_{{\mathbb R}^2} dk\, d\omega\, 
  \frac{\sin\(y_0 \sqrt{\omega^2-k^2}\)}{\sqrt{\omega^2-k^2}}\, \tilde v(k,\omega).
\end{equation}

Further we also use the following estimate
\begin{equation}\label{tivnormdiff}
  \int_{-\infty}^\infty d\omega\, |\tilde v(k,\omega) - \tilde v(k',\omega)| \leqslant
  \frac{C |k-k'|}{1+k^2}, \quad k\cdot k' \geqslant 0, 
  \quad |k| \leqslant |k'|\leqslant |k|+1.  
\end{equation}
To prove~\re{tivnormdiff} we suppose
$0 \leqslant k \leqslant k' \leqslant k+1$.
We have
\begin{align*}
  &\int_{-\infty}^\infty d\omega\, |\tilde v(k,\omega) - \tilde v(k',\omega)| = 
  \frac{1}{\pi} \int_{k}^{k'} d\omega\, \omega
  \left|\hat u_0\left(k, \sqrt{\omega^2-k^2}\right)\right| +\\
  +&\frac{1}{\pi} \int_{k'}^\infty d\omega\, \omega
  \left|\hat u_0\left(k, \sqrt{\omega^2-k^2}\right) - 
  \hat u_0\left(k', \sqrt{\omega^2-k'^2}\right)\right|.
\end{align*}
The first integral is estimated by the r.h.s. of~\re{tivnormdiff}
in view of~\re{u0decay}.
In the second integral we make change of variable
$l = \sqrt{\omega^2-k'^2}$:
\begin{align}
  &\int_{0}^\infty dl\, l\,
  \left|\hat u_0\left(k, \sqrt{l^2+k'^2-k^2}\right) - 
  \hat u_0\left(k', l\right)\right| \leqslant \notag\\
  \leqslant&\int_{0}^\infty dl\, l\,
  \left|\hat u_0\left(k, \sqrt{l^2+k'^2-k^2}\right) - \hat u_0(k, l)\right| +
  \int_{0}^\infty dl\, l\,
  \left|\hat u_0(k, l) - \hat u_0(k', l)\right|. \label{sqr}
\end{align}
Now we estimate the second integral in the obtained expression
using the estimate of
$\partial\hat u_0 / \partial k$ in~\re{u0decay}:
\begin{align*}
  \int_{0}^\infty dl\, l\,
  \left|\hat u_0(k, l) - \hat u_0(k', l)\right| \leqslant
  \int_{0}^\infty dl\, l\,
  (k'-k)\, \frac{C}{(1+k^3)(1+l^3)} \leqslant
  \frac{C (k'-k)}{1+k^3}.
\end{align*}
The first integral in the r.h.s. of~\re{sqr} can be estimated with the help of the estimate of
$\partial\hat u_0 / \partial l$ in~\re{u0decay}:
\begin{align*}
  &\int_{0}^\infty dl\, l\,
  \left|\hat u_0\left(k, \sqrt{l^2+k'^2-k^2}\right) - \hat u_0(k, l)\right| \leqslant\\
  \leqslant&\int_{0}^\infty dl\, l\,
  \frac{C}{(1+k^3)(1+l^3)} \left(\sqrt{l^2+k'^2-k^2} - l\right) \leqslant\\
  \leqslant&\frac{C (k'^2-k^2)}{1+k^3}
  \int_{0}^\infty dl\, \frac{l}{(1+l^3)(\sqrt{l^2+k'^2-k^2} + l)}.
\end{align*}
Here the integrand can be estimated by $1/(1+l^3)$.
The factor before the integral is estimated by the r.h.s. of~\re{tivnormdiff}.

\section{Another representation of the kernel $K_h$}
Rewrite~\re{Kh} in the following form
\begin{align*}
  &K_h(x, y_0, t) = \\
  &=\frac{1}{2\pi} \, 
  \sum_\pm\int_0^{\pi/2} ds\, \frac{1}{2\sqrt{\pi h(c\pm ix)}}\,
   \exp\left(-\frac{1}{4h(c \pm ix)}\left(x \pm i\sqrt{y_0^2-t^2}\, {\sin}s\right)^2\right).
\end{align*}
The integrand is equal to
\[
  \frac{1}{2\pi} \int_{-\infty}^\infty dk\, e^{-ikx - hk^2(c\pm ix) \pm k\,\sqrt{y_0^2-t^2}\, {\sin}s}
\]
(the integral is absolutely convergent, since $c,h > 0$).
This yields
\begin{equation}\label{KH}
  K_h(x, y_0, t) = \frac{1}{4\pi} \, 
  \sum_\pm\int_{-\infty}^\infty dk\, e^{-ikx - hk^2(c\pm ix)}
  H\left(\pm k\, \sqrt{y_0^2-t^2}\right),
\end{equation}
where
\[
  H(z) := \frac{1}{\pi} \int_0^{\pi/2} ds\, e^{z\,{\sin}s}.
\]

Define $G_\pm(k,\omega)$ as the inverse Fourier transform in $t$ of the function
\[
  \theta(y_0-|t|)\, H\left(\pm k\,\sqrt{y_0^2-t^2}\right).
\]
We do not indicate the dependence of $G_\pm$ on $y_0$ explicitly.
To study the functions $G_\pm$ we need the following relations:
\begin{equation}\label{HJ}
  H(z) + H(-z) = J_0(i z)
\end{equation}
($J_0$ is the Bessel function), 
\begin{equation}\label{besselj}
  \frac{1}{2\pi} \int_{-y_0}^{y_0} dt\, e^{i\omega t}\,
  \frac{1}{2} J_0\(ik\sqrt{y_0^2-t^2}\) =
  \frac{\sin\(y_0 \sqrt{\omega^2-k^2}\)}{2\pi\sqrt{\omega^2-k^2}}.
\end{equation}
The equality~\re{HJ} follows from the definition of $H$
and the following representation for the Bessel function~\cite{wats}
\[
  J_0(z) = \frac{1}{2\pi} \int_{-\pi}^\pi ds\, e^{i z\, {\sin}s}.
\]
The equality~\re{besselj} is just the formula for the inverse Fourier transform in $t$ of the function
\[
  \Phi(y_0,t) := \frac{1}{2} \theta(y_0-|t|) J_0\(ik\sqrt{y_0^2-t^2}\).
\]
The function $\Phi$ is the solution of the following Cauchy problem
\begin{align*}
  \diffd{\Phi}{y_0} - \diffd{\Phi}{t} - k^2\Phi = 0, \quad
  \Phi|_{y_0=0} = 0, \quad \diff{\Phi}{y_0}\bigg|_{y_0=0} = \delta(t),
\end{align*}
where the ``time'' variable is $y_0$.
This can be checked directly or deduced from the results of~\cite[Ch. V]{K}.
Hence the inverse Fourier transform $\widecheck\Phi(y_0,\omega)$
in $t$ of $\Phi(y_0,t)$ satisfies
\begin{align*}
  \diffd{\widecheck\Phi}{y_0} + (\omega^2 - k^2)\widecheck\Phi = 0, \quad
  \widecheck\Phi|_{y_0=0} = 0, \quad \diff{\widecheck\Phi}{y_0}\bigg|_{y_0=0} = \frac{1}{2\pi},
\end{align*}
which implies~\re{besselj}.

\begin{prop}\label{Gprop}
  The following inequalities hold true
  \begin{enumerate}
  \item $|G_\pm(k,\omega)| \leqslant C$, if $\mp k \geqslant 0$; \label{1}
  \item $|G_\pm(k,\omega)| \leqslant C$, if $|k| \leqslant |\omega|$;\label{2}
  \end{enumerate}
  (constant $C$ is independent of $k$, $\omega$).
\end{prop}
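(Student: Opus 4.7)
The plan is to exploit the explicit Fourier representation
\[
G_\pm(k,\omega) = \frac{1}{2\pi}\int_{-y_0}^{y_0} dt\, e^{i\omega t}\, H\!\left(\pm k\sqrt{y_0^2-t^2}\right),
\]
which follows directly from the definition of $G_\pm$. For part (\ref{1}) I would just use a pointwise bound on $H$. Indeed, the hypothesis $\mp k\geqslant 0$ makes $\pm k\sqrt{y_0^2-t^2}$ non-positive, so $e^{\pm k\sqrt{y_0^2-t^2}\,\sin s}\leqslant 1$ for every $s\in[0,\pi/2]$, and the definition of $H$ gives $|H(\pm k\sqrt{y_0^2-t^2})|\leqslant 1/2$. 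Substituting into the integral yields $|G_\pm(k,\omega)|\leqslant y_0/(2\pi)$, independently of $\omega$ and $k$.

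For part (\ref{2}) the difficulty is that when $\pm k\sqrt{y_0^2-t^2}$ is positive and large, $H$ grows exponentially, so no naive bound on the integrand survives. The remedy is to work with $G_+ + G_-$, using the identity~\re{HJ}. Adding the two integrals gives
\[
G_+(k,\omega)+G_-(k,\omega) = \frac{1}{2\pi}\int_{-y_0}^{y_0} dt\, e^{i\omega t}\, J_0\!\left(ik\sqrt{y_0^2-t^2}\right),
\]
and by~\re{besselj} the right-hand side equals $\sin\!\left(y_0\sqrt{\omega^2-k^2}\right)\!/\!\left(\pi\sqrt{\omega^2-k^2}\right)$. In the regime $|k|\leqslant|\omega|$ the argument $\sqrt{\omega^2-k^2}$ is real, and the elementary estimate $|\sin(y_0 r)/r|\leqslant y_0$ (valid for all real $r$, including $r=0$ by continuity) gives $|G_+(k,\omega)+G_-(k,\omega)|\leqslant y_0/\pi$.

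Combining this with part (\ref{1}) finishes the proof: for any $k$ one of the two functions $G_+$ or $G_-$ is already bounded (namely $G_{\mp\mathrm{sgn}\,k}$), and the sum bound above then forces the other one to be bounded as well. Explicitly, for $k\geqslant 0$ and $|k|\leqslant|\omega|$, part (\ref{1}) gives $|G_-(k,\omega)|\leqslant C$, whence $|G_+(k,\omega)|\leqslant |G_++G_-|+|G_-|\leqslant C$; the case $k\leqslant 0$ is symmetric. The only conceptual step is the cancellation encoded in~\re{HJ}, which converts the exponentially growing $H$ into the oscillatory Bessel expression; once that is in hand, the whole argument is just combining two elementary bounds.
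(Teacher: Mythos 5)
Your proof is correct and follows essentially the same route as the paper: the pointwise bound $|H(z)|\leqslant 1/2$ for $z\leqslant 0$ gives part (\ref{1}), and the identity~\re{HJ} combined with~\re{besselj} yields $G_++G_-=\sin\bigl(y_0\sqrt{\omega^2-k^2}\bigr)/\bigl(\pi\sqrt{\omega^2-k^2}\bigr)$, from which part (\ref{2}) follows by the triangle inequality. The constants and the logic match the paper's argument.
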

\begin{proof}
By the definition of $H$ we have
\[
  G_\pm(k,\omega) = \frac{1}{2\pi^2} \int_{-y_0}^{y_0} dt\, e^{i\omega t}
  \int_0^{\pi/2} ds\, e^{\pm k\,\sqrt{y_0^2-t^2} \,{\sin}s}.
\]
This leads to the first inequality of the Proposition.
Due to~\re{besselj} and \re{HJ} we have
\begin{equation}\label{GG}
  G_+(k,\omega) + G_-(k,\omega) = 
  \frac{\sin(y_0 \sqrt{\omega^2-k^2})}{\pi \sqrt{\omega^2-k^2}}.
\end{equation}
The r.h.s. is bounded if $|k| \leqslant |\omega|$;
together with the first inequality this implies the second inequality of the Proposition.
\end{proof}

\section{``Nonlocal'' version of the formula~\re{mainn}}
In this section we prove the relation
  \begin{equation}\label{lim}
    u(0, y_0, 0) = \lim_{h\to 0+} \int_{-y_0}^{y_0} dt 
    \int_{-\infty}^\infty dx\, K_h(x, y_0, t)\, v(x,t), 
  \end{equation}
which differs from~\re{mainn} in the set of integration
(recall that $v$ was defined by~\re{vdef}).

To prove the equality~\re{lim} we show that the r.h.s. coincides with the r.h.s. of~\re{vsin}.
In view of~\re{KH} for the integral in $x$ in the r.h.s. of~\re{lim} we have
\begin{align*}
  &\sum_\pm \frac{1}{4\pi} \int_{-\infty}^\infty dx\, v(x,t)
  \int_{-\infty}^\infty dk\, e^{-ikx - hk^2(c\pm ix)} H\left(\pm k\,\sqrt{y_0^2-t^2}\right) =\\
  =&\sum_\pm \frac{1}{4\pi} 
  \int_{-\infty}^\infty dk\, e^{-hc k^2} 
  H\left(\pm k\,\sqrt{y_0^2-t^2}\right)
  \int_{-\infty}^\infty dx\, e^{-ikx \mp hk^2 ix}\, v(x,t) =\\
  =&\sum_\pm \frac{1}{4\pi} 
  \int_{-\infty}^\infty dk\, e^{-hc k^2} 
  H\left(\pm k\,\sqrt{y_0^2-t^2}\right)
  \overline v(k\pm hk^2,t).
\end{align*}
Here $\overline v(k,t)$ is Fourier transform of $v(x,t)$ in $x$.
We can change the order of integration in the second equality,
since $v(\cdot, t)$ is compactly supported.
Now for the integral in $t$ in the r.h.s. of~\re{lim} we can write
\begin{align}
  &\sum_\pm \frac{1}{4\pi} \int_{-y_0}^{y_0} dt
  \int_{-\infty}^\infty dk\, e^{-hc k^2} 
  H\left(\pm k\,\sqrt{y_0^2-t^2}\right)
  \overline v(k\pm hk^2,t) =\notag\\
  =&\sum_\pm \frac{1}{4\pi} \int_{-\infty}^\infty dk\, e^{-hc k^2} 
  \int_{-y_0}^{y_0} dt\, H\left(\pm k\,\sqrt{y_0^2-t^2}\right)
  \overline v(k\pm hk^2,t).\label{go}
\end{align}
The function $\overline v(k,\cdot)$ equals the inverse Fourier transform
of $\tilde v(k, \cdot)$, which belongs to $L_2({\mathbb R})$
(the latter can be proved analogously to~\re{tivnorm}).
Hence $\overline v(k,\cdot)\in L_2({\mathbb R})$ and for the integral in $t$
in the r.h.s. of~\re{go} we have
\begin{align*}
  \int_{-y_0}^{y_0} dt\, H\left(\pm k\,\sqrt{y_0^2-t^2}\right)
  \overline v(k\pm hk^2,t) =
  \int_{-\infty}^\infty d\omega\, G_\pm(k, \omega)\, \tilde v(k\pm hk^2, \omega).
\end{align*}
The expression obtained in~\re{go} can written as follows
\begin{align}
  &\sum_\pm \frac{1}{4\pi} \int_{-\infty}^\infty dk\, e^{-hc k^2} 
  \int_{-\infty}^\infty d\omega\, G_\pm(k, \omega)\, \tilde v(k\pm hk^2, \omega). \label{gv}
\end{align}
We show that this tends to
\[
  \sum_\pm \frac{1}{4\pi} \int_{-\infty}^\infty dk 
  \int_{-\infty}^\infty d\omega\, G_\pm(k, \omega)\, \tilde v(k, \omega)
\]
as $h\to 0$, which is equal to~\re{vsin} in view of~\re{GG}.

To calculate the limit of~\re{gv} we estimate the integral
\begin{align}
  \int_{-\infty}^\infty dk\, 
  \int_{-\infty}^\infty d\omega\, |e^{-hc k^2} \tilde v(k \pm hk^2, \omega) - \tilde v(k, \omega)| \cdot
  |G_\pm(k, \omega)|.\label{limdiff}
\end{align}
Consider the case ``$-$'' (the other case can be considered analogously).
First we inspect the integral over the set $0\leqslant k<\infty$, $\omega\in{\mathbb R}$.
The function $G_-(k,\omega)$ can be estimated by the constant $C$ due to the inequality (1) of Proposition~\ref{Gprop}.
So we need to estimate the integral
\begin{align}
  &\int_0^\infty dk\, 
  \int_{-\infty}^\infty d\omega\, |e^{-hc k^2} \tilde v(k - hk^2, \omega) - \tilde v(k, \omega)| 
  = \notag\\
  =&\int_0^{h^{-\gamma}} + \int_{h^{-\gamma}}^\infty dk \int_{-\infty}^\infty d\omega\, 
  |e^{-hc k^2} \tilde v(k - hk^2, \omega) - \tilde v(k, \omega)|,
  \label{limdiff222}
\end{align}
where $0<\gamma<1/2$.
For the first integral in the r.h.s. we have
\begin{align*}
  &\int_0^{h^{-\gamma}} dk\, 
  \int_{-\infty}^\infty d\omega\, |e^{-hc k^2} \tilde v(k - hk^2, \omega) - \tilde v(k, \omega)| 
  \leqslant \notag\\
  \leqslant&\int_0^{h^{-\gamma}} dk \int_{-\infty}^\infty d\omega\, 
  [e^{-hc k^2} |\tilde v(k - hk^2, \omega) - \tilde v(k, \omega)| + (1-e^{-hc k^2})|\tilde v(k, \omega)|].
\end{align*}
If $0\leqslant k<h^{-\gamma}$ then $hck^2 \leqslant c h^{1-2\gamma}$, so
$1-e^{-hc k^2} \leqslant C h^{1-2\gamma}$. 
Combining this with~\re{tivnorm}, we obtain
\begin{align*}
  \int_0^{h^{-\gamma}} dk \int_{-\infty}^\infty d\omega\, 
  (1-e^{-hc k^2})|\tilde v(k, \omega)| \leqslant C h^{1-2\gamma} \to 0, \quad h\to 0.
\end{align*}
Next in view of~\re{tivnormdiff} we have
\begin{align*}
  \int_0^{h^{-\gamma}} dk \int_{-\infty}^\infty d\omega\, 
  |\tilde v(k - hk^2, \omega) - \tilde v(k, \omega)| \leqslant
  \int_0^{h^{-\gamma}} dk\, \frac{C\cdot hk^2}{1+(k-hk^2)^2}.
\end{align*}
The inequality~\re{tivnormdiff} is applicable if $h\leqslant 1$,
since on the set of integration we have
$hk^2 \leqslant h^{1-2\gamma} \leqslant 1$.
Besides, for sufficiently small $h$ the inequality
$k-hk^2 \geqslant k/2$ holds true,
hence the integral obtained above
can be estimated by
\[
  C h^{1-2\gamma} \int_0^{h^{-\gamma}} \frac{dk}{1+k^2} \to 0, \quad h\to 0.
\]
Next the second integral in the r.h.s. of~\re{limdiff222} is majorized by
\begin{align*}
  \int_{h^{-\gamma}}^\infty dk \int_{-\infty}^\infty d\omega\, 
  [e^{-hc k^2} |\tilde v(k - hk^2, \omega)| + |\tilde v(k, \omega)|].
\end{align*}
Here the integral of $|\tilde v(k, \omega)|$ tends to zero as $h\to 0$ due to~\re{tivnorm}.
The integral of $e^{-hc k^2} |\tilde v(k - hk^2, \omega)|$ equals the sum of two integrals over the intervals
$h^{-\gamma}<k<h^{-\beta}$ and $h^{-\beta}<k<\infty$, where $1/2<\beta<1$.
In case $h^{-\gamma}<k<h^{-\beta}$ for sufficiently small $h$
we use the inequality $k-hk^2 \geqslant k/2$ (which follows from $\beta<1$) and the inequality~\re{tivnorm}:
\begin{align*}
  \int_{h^{-\gamma}}^{h^{-\beta}} dk \int_{-\infty}^\infty d\omega\, 
  |\tilde v(k - hk^2, \omega)| \leqslant
  \int_{h^{-\gamma}}^{h^{-\beta}} dk\, \frac{C}{1+k^2} \to 0, \quad h\to 0.
\end{align*}
For $h^{-\beta}<k<\infty$ we simply estimate the integral of
$\tilde v$ in $\omega$ by constant in accordance with~\re{tivnorm}:
\begin{align*}
  &\int_{h^{-\beta}}^\infty dk \int_{-\infty}^\infty d\omega\, 
  e^{-hc k^2} |\tilde v(k - hk^2, \omega)| \leqslant
  C \int_{h^{-\beta}}^\infty dk\, e^{-hc k^2} = \\
  &=C h^{-\beta} \int_1^\infty ds\, e^{-h^{1-2\beta} c s^2} \leqslant
  C h^{-\beta} \int_1^\infty ds\, e^{-h^{1-2\beta} c s} =
  C h^{\beta-1} e^{-h^{1-2\beta}}.
\end{align*}
The obtained majorant tends to zero as $h\to 0$,
since $\beta>1/2$.

To estimate the expression~\re{limdiff}
it remains to consider the corresponding integral over the set
$-\infty< k< 0, \omega\in{\mathbb R}$.
In this case the function $G_-(k,\omega)$ is majorized by constant $C$ as well.
Indeed, the integrand vanishes if $|\omega|<|k|$, which follows
from~\re{tiv}. From the other hand, if $|\omega|>|k|$
the inequality (2) of Proposition~\ref{Gprop} holds true.
Further
\begin{align*}
  &\int_{-\infty}^0 dk\, 
  \int_{-\infty}^\infty d\omega\, |e^{-hc k^2} \tilde v(k - hk^2, \omega) - \tilde v(k, \omega)|
  \leqslant \notag\\
  \leqslant& \int_{-h^{-\gamma}}^0 dk \int_{-\infty}^\infty d\omega\, |e^{-hc k^2} \tilde v(k - hk^2, \omega) - \tilde v(k, \omega)| + \notag\\
  +&\int_{-\infty}^{-h^{-\gamma}} dk \int_{-\infty}^\infty d\omega\, \(|\tilde v(k - hk^2, \omega)| + 
  |\tilde v(k, \omega)| \).\end{align*}
Here $0<\gamma< 1/2$.
The second integral in the r.h.s. tends to zero
as $h\to 0$ in view of the inequality $|k-hk^2| > |k|$
and estimate~\re{tivnorm}.
The first integral can be estimated similarly
to the first integral in the r.h.s. of~\re{limdiff222}.

The relation~\re{lim} is now proved.

\section{Derivation of formula~\re{mainn}}\label{formula_derive}
In view of~\re{lim} to prove formula~\re{mainn} it remains
to show that
\begin{equation}\label{limnull}
  \lim_{h\to 0+} \int_{-y_0}^{y_0} dt
  \int_{|x| > D\(\sqrt{y_0^2-t^2}\)+\varepsilon} dx\,
  K_h(x, y_0, t)\, v(x,t) = 0.
\end{equation}
Recall that we suppose $u_0$ to be compactly supported.
This means that for some $d$ the inequalities
$-y_0\leqslant t\leqslant y_0$, $x^2+y^2 > d^2$ imply that $u(x,y,t)=0$.
Therefore, if $-y_0\leqslant t\leqslant y_0$,
$|x| > d$, we have $v(x,t)=0$,
and thus to estimate the integral in~\re{limnull}
we need to estimate $K_h(x, y_0, t)$ on the set
\begin{equation}\label{xt}
  D\(\sqrt{y_0^2-t^2}\)+\varepsilon \leqslant |x| \leqslant d.
\end{equation}
We prove the following inequality
\begin{equation}\label{Khloc}
  |K_h(x,y_0,t)| \leqslant C h^{-1/2} e^{-a\varepsilon^2/h},
\end{equation}
where $x,t$ satisfy~\re{xt}, $0<\varepsilon\leqslant d$
(if $\varepsilon>d$, then the set~\re{xt} is empty), $h>0$, and
$C,a$ are positive constants independent of
$x$, $t$, $h$, $\varepsilon$.

Denote by $F$ the exponent in the integral in~\re{Kh}.
Also put $z:=\sqrt{y_0^2-t^2}$, $\sigma:={\sin}s$.
We have
\begin{align*}
  {\rm Re} F = \frac{c (-x^2 + z^2\sigma^2) - 2x^2 z\sigma}{4 h (c^2+x^2)}.
\end{align*}
The function ${\rm Re} F$ is convex in $\sigma$, so it satisfies
the inequality
\[
  {\rm Re} F \leqslant \max({\rm Re} F|_{\sigma=0}, {\rm Re} F|_{\sigma=1})
\]
on the interval $0\leqslant \sigma\leqslant 1$.
Since $|x|\geqslant\varepsilon$, we have
\[
  {\rm Re} F|_{\sigma=0} = \frac{-cx^2}{4h(c^2+x^2)} \leqslant
  \frac{-c\varepsilon^2}{4h(c^2+\varepsilon^2)} \leqslant \frac{-c\varepsilon^2}{4h(c^2+d^2)}.
\]
Next
\begin{align*}
  &c z^2 - x^2 (2z+c) = (2z+c)(D(z)^2-x^2) \leqslant
  -c(x^2-D(z)^2) <\\
  &<-c\varepsilon(|x|+D(z)) \leqslant -c\varepsilon^2.
\end{align*}
Therefore,
\begin{align*}
  {\rm Re} F|_{\sigma=1} \leqslant \frac{-c\varepsilon^2}{4h(c^2+d^2)}.
\end{align*}
We proved that if $0\leqslant \sigma\leqslant 1$, then
\[
  {\rm Re} F \leqslant \frac{-a\varepsilon^2}{h}, \quad a = \frac{c}{4(c^2+d^2)}.
\]
This implies~\re{Khloc}, and the relation~\re{limnull} now follows.

The relations~\re{lim} and \re{Khloc} lead to~\re{mainn}.

\end{document}